\newtheorem{thm}{Theorem}[section]
\newtheorem{definition}[thm]{Definition}
\newtheorem{notation}[thm]{Notation}
\newtheorem{cor}[thm]{Corrollary}
\newtheorem{lem}[thm]{Lemma}
\newtheorem{obs}[thm]{Observation}
\newtheorem{quizz}[thm]{Question}
\newcommand{\pomega}{\mbox{$\mathcal{P}(\omega)$}}
\newcommand{\cont}{\mbox{$\mathfrak c$}}
\newcommand{\ba}{\mbox{${\mathbb B}$}}
\newcommand{\0}{\mbox{${\bf 0}$}}
\newcommand{\F}{\mbox{${\mathcal F}$}}
\newcommand{\cl}[1]{\mbox{$\overline{#1}$}}
\newcommand{\intr}[1]{\mbox{$\hbox{int} #1$}}
\title{Lonely points in $\omega^*$}
\author{Jonathan Verner\footnote{e-mail: jonathan.verner\@matfyz.cz, postal: Dvouletky 47, Praha 10 --- 100 00, Czech Republic} \\ KTIML Charles University, Prague}
\begin{document}
\date{July 2007}
\maketitle

\begin{abstract}
Inspired by the work of J. van Mill we define a new topological
type --- lonely points. We show that the question of whether these
points exist in $\omega^*$ is equivalent to finding a countable OHI,
extremally disconnected, zerodimensional space with a remote
weak P-point. We also present methods which allow us to find lonely points
in a large subspace of $\omega^*$ and show why known methods do not
allow us to construct them in all of $\omega^*$.

\vspace{0.2cm}
\noindent {\sffamily {\bfseries MSC}: 54D35 (primary); 06E15; 54G05}\\
\noindent {\sffamily {\bfseries Keywords}: {\itshape $\beta\omega$,
irreducibility, weak P-point, remote point, lonely point}}
\end{abstract}

\section{Introduction and definitions}

The motivation for this paper comes from homogeneity questions in
topology.

\begin{definition} A topological space $X$ is \emph{homogeneous} if for any two
points there is a homeomorphism of the space mapping one to the other. A subset
$T\subset X$ of the space is a \emph{topological type} if it is invariant
under homeomorphisms, that is any homeomorphism maps points in $T$ to points in
$T$.
\end{definition}

It is immediately seen that if a space contains two distinct nonempty topological
types, then it cannot be homogeneous. This observation is generally used to
prove nonhomogeneity of topological spaces. Thus Rudin proved (\cite{Rudin})
that, assuming CH, $\omega^*$ is not homogeneous by proving that P-points
(which form a type) exist. A decade later Z. Frol\'\i k proved in ZFC
(\cite{Frolik}) that $\omega^*$ contains $2^{\cont}$ many distinct topological
types which gave a final answer to the homogeneity question for $\omega^*$.
His proof was, however, slightly unsatisfactory because it was of combinatorial
nature and did not give a ``topological'' description of even a single type.
This led to the question of whether one can prove the existence of a
``topologically'' defined type in $\omega^*$. It took another decade for the
first answer to appear. In 1978 Kunen proved (\cite{WeakPp}) the existence
of weak P-points in $\omega^*$:
\begin{definition} A point $p\in X$ is a \emph{weak P-point} if it is not a
limit point of a countable subset of $X$. A subset $P\subseteq X$ is a weak
P-set if it is disjoint from the closure of any countable set disjoint from it.
\end{definition}
Four years later, van Mill in his celebrated paper (\cite{Mill16}) gave a
``topological'' description of 16 topological types and proved (in ZFC) that
they exist in $\omega^*$. We give a definition of two points closely related to
van Mill's work which are relevant to this paper:
\begin{definition} A point $p\in X$ is
\begin{itemize}
 \item[(1)] \emph{Countably discretely untouchable} if it is not a limit point
             of a countable discrete set.
 \item[(2)] \emph{Uniquely accessible} if whenever it is a limit point
of two countable sets it is a limit point of their intersection.
\end{itemize}
\end{definition}

A natural question to ask is whether we can have both properties at the same
time. This leads to the following definition and questions:

\begin{definition}\label{lonelyDef} A point $p\in X$ is \emph{lonely} if it satisfies:
\begin{itemize}
\item[(i)] it is countably discretely untouchable,
\item[(ii)] it is a limit point of a countable crowded set and
\item[(iii)] it is uniquely accessible
\end{itemize}
\end{definition}
\begin{quizz} Does $\omega^*$ contain lonely points?
\end{quizz}
\begin{quizz} Is there at least a ``large'' subspace of $\omega^*$ which
contains lonely points?
\end{quizz}

It should be noted that the existence in $\omega^*$ of points satisfying
(i-ii) is due to Kunen under MA (\cite{SomePoints}) and in ZFC to
van Mill (\cite{Mill16}). However the methods of construction employed by
these authors cannot be used to construct a point satisfying (iii).
We comment on this in the last section.

The paper is organized as follows: In the second section we state some
embedding theorems, the third is devoted to the characterization of lonely
points, fourth introduces some facts about OHI spaces and the last section
gives a positive answer to the second question and some comments on the
potential solution to the first question. The first four sections are
preparatory material for the proof of the main theorem which is contained
in the last section.

We shall denote by $X^*$ the \v{C}ech-Stone remainder of $X$,
i.e. $\beta X\setminus X$, in particular $\omega^*$ is the space of free
ultrafilters on $\omega$ with the Stone topology. All the spaces considered
are, unless otherwise stated, assumed to be (at least) $T_{3\frac{1}{2}}$ and
crowded (that is without isolated points).

I would like to thank my advisor P. Simon for reading
the earlier versions of this paper and for his guidance while working on the
problem. I would also like to thank the referee for valuable comments which
helped to make the paper more readable.

\section{Finding points in $\omega^*$}

Working in $\omega^*$ is sometimes difficult and it can be easier to work
outside and then embed the resulting situation into $\omega^*$. This was a
technique van Mill used in his \cite{Mill16}. We use a theorem of Simon:

\begin{definition} A space is \emph{extremally disconnected} if the closure of
an open subset of the space is open.
\end{definition}

\begin{thm}[\cite{SimonIndep}]\label{simon} Every extremally disconnected compact space
 of weight $\leq\cont$ can be embedded into $\omega^*$ as a closed weak P-set.
\end{thm}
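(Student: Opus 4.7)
By Stone duality the claim becomes algebraic. Since $K$ is compact, Hausdorff and zero-dimensional (extremal disconnectedness forces zero-dimensionality), $K$ is the Stone space of its clopen algebra $\mathbb{B}$; $\mathbb{B}$ is complete because $K$ is extremally disconnected and $|\mathbb{B}|\leq\cont$ because $w(K)\leq\cont$. A closed embedding $K\hookrightarrow\omega^*$ with image $F$ is equivalent to a surjective Boolean homomorphism $\varphi:\pomega/\mathrm{fin}\twoheadrightarrow\mathbb{B}$, and $F$ is a weak P-set iff for every countable $D\subseteq\omega^*\setminus F$ there exists $A\subseteq\omega$ with $\varphi([A])=1_{\mathbb{B}}$ and $\omega\setminus A$ lying in every ultrafilter of $D$ (such an $A$ yields a clopen neighborhood of $F$ disjoint from $D$).

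My plan is to build $\varphi$ by a transfinite recursion of length $\cont$ powered by an independent system (in the sense of \cite{SimonIndep}) $\{M(n,\alpha):n\in\omega,\,\alpha<\cont\}$ of subsets of $\omega$, a ZFC strengthening of the Fichtenholz--Kantorovich independent family whose defining property is that any finite choice $\{M(f(n),\alpha_n):n\in F\}$ has infinite intersection. Enumerate a dense subset $(b_\alpha)_{\alpha<\cont}$ of $\mathbb{B}\setminus\{0\}$, and in parallel enumerate countable collections $(D_\alpha)_{\alpha<\cont}$ of partial ultrafilters on $\omega$ that must eventually be barred from accumulating into $F$. At stage $\alpha$ I would spend one unused row of the system to define a representative of $\varphi(b_\alpha)$ compatible with the finitely many Boolean relations already committed among earlier images, and further unused rows to manufacture a set $A_\alpha\subseteq\omega$ separating $D_\alpha$ from the partially-defined $F$ in the sense above, so killing one potential violation of the weak P-set property.

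The principal obstacle is the joint bookkeeping: at stage $\alpha$ only finitely many Boolean equations and at most countably many separation conditions are simultaneously active, while fewer than $\cont$ rows of the matrix have been used, so the characteristic property of an independent system produces a fresh row meeting all constraints. Completeness of $\mathbb{B}$ then lets the partial homomorphism defined on the dense set $\{b_\alpha:\alpha<\cont\}$ extend uniquely to a total Boolean surjection onto $\mathbb{B}$ via suprema. Closedness of $F$ follows from continuity of the dual map, and the weak P-set property is immediate from the separations secured at each stage. The delicate point, and where the matrix structure (rather than a bare independent family) is essential, is that each countable ``attack'' $D_\alpha$ consumes only countably many rows and still leaves $\cont$-many free for future use.
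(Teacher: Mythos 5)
First, a remark on the comparison itself: the paper offers no proof of this statement --- it is imported verbatim from Simon's paper \cite{SimonIndep} --- so your attempt can only be judged against the known argument, not against anything in the text. Your general skeleton (Stone duality, an independent matrix, a transfinite recursion of length $\cont$) is indeed the right family of techniques, and your preliminary reductions are sound: a compact extremally disconnected space is zero-dimensional with a \emph{complete} clopen algebra $\mathbb{B}$ of size $\leq\cont$, closed subspaces of $\omega^*$ correspond to quotients of $\pomega/\mathrm{fin}$, and the weak P-set property translates as you state. But two essential points are missing, and the first is the heart of the theorem.

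The main gap is the bookkeeping for the weak P-set property. You propose to ``enumerate countable collections $(D_\alpha)_{\alpha<\cont}$'' of things to be separated from $F$. A weak P-set must avoid the closure of \emph{every} countable subset of $\omega^*\setminus F$, and there are $2^{\cont}$ such subsets (a single ultrafilter is already one of $2^{\cont}$ objects not coded by countably many subsets of $\omega$), so they cannot be listed in a recursion of length $\cont$. Every construction of this kind must first replace the weak P-set requirement by a stronger property that \emph{is} expressible by only $\cont$-many requirements about subsets of $\omega$: for instance, arranging that the kernel ideal $\ker\varphi$ is a P-ideal (so that for any countable $\{B_n\}\subseteq\ker\varphi$ there is $C\in\ker\varphi$ with $B_n\subseteq^* C$, whence $A=\omega\setminus C$ witnesses the separation for any ultrafilters $p_n\ni B_n$), or working with a $\cont$-OK-set-type condition as in Kunen's and Simon's arguments. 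Without this reduction your phrase ``the weak P-set property is immediate from the separations secured at each stage'' does not hold: you have only secured $\cont$-many separations against $2^{\cont}$-many potential attackers. The second, lesser, gap is surjectivity of $\varphi$. Defining $\varphi$ on a set of elements whose images form a dense subset of $\mathbb{B}$ and ``extending via suprema'' does not yield a Boolean homomorphism onto $\mathbb{B}$: a subalgebra containing a dense set need not be the whole algebra (the free algebra is dense in its completion), and a map on a dense set has no canonical homomorphic extension. The standard fix is to enumerate \emph{all} of $\mathbb{B}$, send the members of an independent family onto it (freeness means there are no relations to respect, which also disposes of your worry about ``finitely many Boolean equations''), and then invoke Sikorski's extension theorem, using completeness of $\mathbb{B}$, to extend from the free subalgebra to all of $\pomega/\mathrm{fin}$. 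With these two repairs your outline becomes essentially Simon's proof; as written, it would not go through.
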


An easy observation shows that this theorem is adequate for our needs, since
the embedding preserves lonely points:

\begin{obs} If $p$ is a lonely point in $Y$ and if $Y$ is a weak P-set in $X$
then $p$ is a lonely point in $X$.
\end{obs}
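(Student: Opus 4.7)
The plan is to verify each of the three defining conditions of a lonely point (i)--(iii) separately, using the weak P-set hypothesis to reduce any countable witness in $X$ to a countable witness inside $Y$. The key observation I will repeatedly invoke is the following immediate consequence of $Y$ being a weak P-set: if $A\subseteq X$ is countable and $p\in Y$, then
\[
    p\in\cl{A}^X\quad\Longrightarrow\quad p\in\cl{A\cap Y}^X,
\]
because otherwise $A\setminus Y$ would be a countable subset of $X$ disjoint from $Y$ whose closure meets $Y$ at $p$. Moreover, since $A\cap Y\subseteq Y$, we have $\cl{A\cap Y}^X\cap Y=\cl{A\cap Y}^Y$, so membership in the closure transfers between $X$ and $Y$.

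For condition (ii) there is essentially nothing to do: the countable crowded set $C\subseteq Y$ witnessing (ii) for $p$ in $Y$ is still countable, crowded (crowdedness is intrinsic to $C$) and accumulates to $p$ in $X$ because $\cl{C}^Y\subseteq\cl{C}^X$. For condition (i), I will argue by contradiction: suppose $D\subseteq X$ is countable, discrete, and has $p$ as an accumulation point. Replacing $D$ with $D\setminus\{p\}$, the key observation gives $p\in\cl{D\cap Y}^X$, hence $p\in\cl{D\cap Y}^Y$. Since $D\cap Y$ is a subset of a discrete set it is itself discrete, countable, and has $p$ as an accumulation point in $Y$, contradicting (i) for $p$ in $Y$.

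For condition (iii), suppose $A,B\subseteq X$ are countable sets having $p$ as a limit point in $X$. Applying the key observation to each, I obtain $p\in\cl{A\cap Y}^Y$ and $p\in\cl{B\cap Y}^Y$, with $A\cap Y$ and $B\cap Y$ countable subsets of $Y$. By (iii) for $p$ in $Y$,
\[
    p\in\cl{(A\cap Y)\cap(B\cap Y)}^Y=\cl{A\cap B\cap Y}^Y\subseteq\cl{A\cap B}^X,
\]
which is exactly what is needed to conclude that $p$ is a limit point of $A\cap B$ in $X$.

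The whole argument is essentially a direct unpacking of the definitions; the only place where care is required is the accumulation-point versus closure-point distinction in (i), which is why I first remove $p$ from $D$ before invoking the weak P-set property. No step here looks like a genuine obstacle: the content of the observation lies entirely in isolating the correct reduction principle above and applying it three times.
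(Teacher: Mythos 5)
Your proof is correct, and since the paper states this as an ``easy observation'' without any proof, your argument---reducing each countable witness $A\subseteq X$ to $A\cap Y$ via the weak P-set property and the decomposition $\cl{A}=\cl{A\cap Y}\cup\cl{A\setminus Y}$---is exactly the intended one. The only cosmetic remark is that the same removal of $p$ you perform in (i) should formally also be done to $A$ and $B$ in (iii), but this changes nothing.
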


Thus the plan is to build a countable extremally disconnected space $X$ such
that $\beta X$ contains a lonely point. Since $X$ is extremally disconnected iff
$\beta X$ is we can then use theorem \ref{simon} to transfer the point
into $\omega^*$.

\section{Characterization of lonely points}

In this section we take a closer look at the properties of lonely points.
We first introduce some definitions connected to these properties and list
some standard theorems for later reference. Then we prove a characterization
theorem for the existence of lonely points.

\begin{obs} Suppose $p\in\omega^*$ is lonely and $p\in\overline{S}\setminus S$, where S is countable and crowded. Then:
\begin{itemize}
\item[(a)] $p$ is not a limit point of any nowhere dense subset of $S$.
\item[(b)] $p$ is a weak P-point in $\overline{S}\setminus S$.
\item[(c)] Whenever $D, H$ are two dense subsets of $S$ then their
intersection is nonempty.
\end{itemize}
\end{obs}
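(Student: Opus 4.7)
The overall plan is to reduce all three parts to the uniquely accessible property (iii), using the fact that any dense subset $D \subseteq S$ is automatically ``seen'' from $p$. More precisely, if $D$ is dense in $S$, then $S \subseteq \overline{D}$ in $\omega^*$, and hence $p \in \overline{S} \subseteq \overline{D}$; since $p \notin S \supseteq D$, this means $p$ is a limit point of the countable set $D$. This preparatory remark, together with (iii), handles (c) immediately: $p$ is a limit point of both $D$ and $H$, so by (iii) it is a limit point of $D \cap H$, which forbids $D \cap H = \emptyset$.

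For (a), given a nowhere dense $N \subseteq S$, the complement $S \setminus N$ is dense in $S$ (no nonempty open subset of $S$ can lie inside $N$), so by the preparatory remark $p$ is a limit point of $S \setminus N$. If $p$ were also a limit point of $N$, applying (iii) to the disjoint countable sets $S \setminus N$ and $N$ would make $p$ a limit point of $\emptyset$, which is absurd.

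Part (b) is probably the step where one is tempted to work harder than necessary, picking for each $q \in C$ a delicate countable approximation $A_q \subseteq S$ accumulating to $q$. The real point, however, is that one need not refine $S$ at all: the offending $C$ and $S$ are themselves two countable sets, $p$ is a limit point of $S$ by (ii), and $C \cap S = \emptyset$ since $C \subseteq \overline{S}\setminus S$; hence, under the assumption that $p$ is a limit point of $C$, applying (iii) to $C$ and $S$ makes $p$ a limit point of the empty intersection, contradiction. Recognizing that $S$ itself is the right ``companion'' to $C$, rather than some ad hoc union of approximating sets, is the only conceptual hurdle in the entire observation.
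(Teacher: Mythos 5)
Your proof is correct. The paper states this as an observation and gives no proof at all, so there is nothing to compare against; your reduction of all three parts to unique accessibility, via the remark that any dense subset of $S$ has $p$ in its closure, is exactly the intended ``easy'' argument (the only cosmetic quibble is that in (b) the fact that $p$ is a limit point of $S$ follows directly from the hypothesis $p\in\overline{S}\setminus S$ rather than from condition (ii)).
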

\noindent

We shall see that these properties in fact characterize lonely points in $\omega^*$.
We now introduce the notions hinted at by the previous observation and some facts about them.

\subsection{Remote points}

\begin{definition}[\cite{Gillman}] A point $p\in X^*$ is a
\emph{remote point} of X if it is not a limit point (in $\beta X$) of a nowhere
dense subset of $X$. A closed filter $\F$ on $X$ is remote if for any nowhere
dense subset $N\subseteq X$ there is an $F\in\F$ which is disjoint from $N$.
\end{definition}

Remote points were investigated by several people (Chae and Smith, van Douwen
and others). Here we mention a theorem of van Douwen:

\begin{thm}[\cite{remote},5.2]\label{extDisc} $\beta X$ is extremally
disconnected at each remote point of $X$. As a corollary a remote point cannot
be in the closure of two disjoint open subsets of $X$.
\end{thm}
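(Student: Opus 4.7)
The plan is to derive a contradiction from the supposition that $p$ is remote and nevertheless $p \in \overline{U}^{\beta X} \cap \overline{V}^{\beta X}$ for some pair $U, V$ of disjoint open subsets of $\beta X$. Setting $A := U \cap X$ and $B := V \cap X$, one has $\overline{U}^{\beta X} = \overline{A}^{\beta X}$ and analogously for $V, B$, so $p \in \overline{A}^{\beta X} \cap \overline{B}^{\beta X}$. The boundaries $\partial_X A$ and $\partial_X B$ taken inside $X$ are nowhere dense, so by remoteness $p \notin \overline{\partial_X A \cup \partial_X B}^{\beta X}$. Normality of $\beta X$ then supplies a continuous $\phi : \beta X \to [0,1]$ with $\phi(p) = 1$ and $\phi \equiv 0$ on $\overline{\partial_X A \cup \partial_X B}^{\beta X}$; I let $W := \{\phi > 0\}$, a cozero neighbourhood of $p$ in $\beta X$ which is disjoint from $\partial_X A \cup \partial_X B$.

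The next step is to slice $W \cap X$ into three disjoint open pieces. With $E := X \setminus (\overline{A}^X \cup \overline{B}^X)$, a short case check using $W \cap (\partial_X A \cup \partial_X B) = \emptyset$ yields $W \cap X \subseteq A \cup B \cup E$, so $W \cap X = A' \sqcup B' \sqcup \varepsilon$ with $A' := W \cap A$, $B' := W \cap B$, $\varepsilon := W \cap E$. Because $\phi$ vanishes on each of the relevant boundaries (and $\partial_X E \subseteq \partial_X A \cup \partial_X B$), the three clipped products $f_A := \phi|_X \cdot \chi_A$, $f_B := \phi|_X \cdot \chi_B$, $f_E := \phi|_X \cdot \chi_E$ are continuous on $X$ --- the discontinuity of each characteristic function across its boundary is absorbed by the zero of $\phi$ there --- with cozeros equal to $A', B', \varepsilon$ respectively. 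A case-check on the partition gives the key identity $f_A + f_B + f_E = \phi|_X$ everywhere on $X$.

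Passing to the \v{C}ech--Stone extensions $\tilde f_A, \tilde f_B, \tilde f_E : \beta X \to [0,1]$, which respect sums and products of bounded continuous functions, I obtain $\tilde f_A + \tilde f_B + \tilde f_E = \phi$ on $\beta X$ together with pairwise vanishing products. Evaluating at $p$ yields $\tilde f_A(p) + \tilde f_B(p) + \tilde f_E(p) = \phi(p) = 1$, so some $\tilde f_\star(p)$ is strictly positive. The corresponding set $\{\tilde f_\star > 0\}$ is then an open neighbourhood of $p$ in $\beta X$ disjoint from the $\beta X$-cozeros (and hence from the $X$-traces) of the other two extensions, so it misses at least one of $A'$ and $B'$. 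But $p \in \overline{A'}^{\beta X} \cap \overline{B'}^{\beta X}$, the desired contradiction. The corollary follows by replacing a pair of disjoint open sets $U, V$ in $X$ with the canonical $\beta X$-extensions $U^{\sharp} := \beta X \setminus \overline{X \setminus U}^{\beta X}$ and $V^{\sharp}$, which are disjoint and open in $\beta X$ with $U^{\sharp} \cap X = U$ and $p \in \overline{U}^{\beta X} \subseteq \overline{U^{\sharp}}^{\beta X}$, reducing to the main statement.

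The crux lies in the construction of the three cozero companions $f_A, f_B, f_E$ whose sum reproduces $\phi|_X$: remoteness enters precisely to force $\phi$ to vanish on the nowhere dense boundary set $\partial_X A \cup \partial_X B$, and this vanishing is exactly what makes the otherwise discontinuous clipped characteristics continuous. The third piece $f_E$, the contribution of $\varepsilon$ (the part of $W \cap X$ lying far from both $A$ and $B$), is essential: without it $f_A + f_B$ would only recover $\phi$ on $A \cup B$, and the mass $\phi(p) = 1$ could simply redistribute into a positive $\tilde f_E(p)$, yielding no contradiction.
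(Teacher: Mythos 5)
The paper does not actually prove this statement --- it is imported verbatim from van Douwen's \emph{Remote points} (the citation [\cite{remote}, 5.2]) and used as a black box --- so there is no internal proof to compare against; what matters is whether your self-contained argument is sound, and it is. Your reduction to disjoint open $A,B\subseteq X$ via density of $X$ in $\beta X$, the use of remoteness to push $p$ off $\overline{\partial_X A\cup\partial_X B}$, the Urysohn function $\phi$ on the compact Hausdorff (hence normal) space $\beta X$, and the three clipped functions $f_A,f_B,f_E$ all check out: each $f_\star$ is continuous precisely because $\phi$ vanishes on the relevant boundary (with $\partial_X E\subseteq\partial_X A\cup\partial_X B$ as you note), the identity $f_A+f_B+f_E=\phi|_X$ holds on all of $X$ because both sides vanish on $\partial_X A\cup\partial_X B$, and the ring homomorphism $C^*(X)\to C(\beta X)$ transports the sum and the pairwise-vanishing products to $\beta X$. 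The one point worth making explicit is the step $p\in\overline{A'}\cap\overline{B'}$ for the \emph{clipped} sets $A'=W\cap A$, $B'=W\cap B$: it follows since $W$ is an open neighbourhood of $p$ and $p\in\overline{A}\cap\overline{B}$, so every neighbourhood of $p$ meets $W\cap A$ and $W\cap B$. You are also right that the third piece $f_E$ is not optional; without it the mass of $\phi(p)=1$ could sit on $\tilde f_E$ and no contradiction would arise. Your closing reduction for the corollary (extending disjoint open subsets of $X$ to disjoint open subsets of $\beta X$ via $U^{\sharp}=\beta X\setminus\overline{X\setminus U}$) is correct, though strictly speaking your main argument already operates on disjoint open subsets of $X$, so the corollary is immediate from it.
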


\subsection{Irresolvable spaces}

\begin{definition}[\cite{Hewitt}] A crowded topological space is
\emph{irresolvable} if it does not contain disjoint dense subsets, otherwise it
is resolvable. It is \emph{open hereditarily irresolvable} (\emph{OHI} for
short) if every open subspace is irresolvable.
\end{definition}

\begin{lem} \label{unionRES} The union of resolvable spaces is resolvable.
\end{lem}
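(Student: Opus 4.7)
The plan is to reduce to the situation $X=\bigcup_i X_i$ with each $X_i$ a resolvable subspace, and apply Zorn's lemma to find two disjoint dense subsets of $X$. The essential preliminary I would record is the fact that every nonempty open subspace of a resolvable space is itself resolvable: if $Y=A\cup B$ witnesses resolvability of $Y$, then for any nonempty open $U\subseteq Y$ the traces $A\cap U$ and $B\cap U$ are disjoint, and each is dense in $U$ because any nonempty open $V\subseteq U$ is already open in $Y$ and therefore meets both $A$ and $B$.

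Equipped with this, I would pick via Zorn a maximal pairwise disjoint family $\mathcal{F}$ of resolvable subspaces of $X$; the inclusion-ordered poset is obviously chain-complete. For each $F\in\mathcal{F}$ fix disjoint dense subsets $A_F,B_F$ of $F$, and set $A:=\bigcup_{F\in\mathcal{F}}A_F$ and $B:=\bigcup_{F\in\mathcal{F}}B_F$. Pairwise disjointness of $\mathcal{F}$ together with $A_F\cap B_F=\emptyset$ makes $A$ and $B$ disjoint.

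The crux, and the step I expect to carry the content of the proof, is to show that $\bigcup\mathcal{F}$ is dense in $X$. If not, $U:=X\setminus\overline{\bigcup\mathcal{F}}$ is nonempty open; since the $X_i$ cover $X$, some $U\cap X_i$ is nonempty, and it is an open subspace of the resolvable space $X_i$, hence itself resolvable by the preliminary fact. Because $U\cap X_i$ lies in $U$ it is disjoint from every $F\in\mathcal{F}$, and adjoining it to $\mathcal{F}$ contradicts maximality.

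It then remains only to check that $A$ (and symmetrically $B$) is dense in $X$: any nonempty open $W\subseteq X$ meets the dense set $\bigcup\mathcal{F}$, hence meets some $F\in\mathcal{F}$ in a nonempty open subset of $F$, which by density of $A_F$ in $F$ meets $A_F$ and therefore $A$. Crowdedness of $X$ is inherited from the resolvable cover, so $X$ is resolvable. Beyond the open-hereditary observation there is no real obstacle; that observation is precisely what converts the natural maximality argument into a complete proof.
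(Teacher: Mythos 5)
Your proof is correct, but it follows a genuinely different route from the one in the paper. The paper's argument well-orders the resolvable spaces and runs a transfinite disjointification: at stage $\alpha$ the chosen dense pair $D^0_\alpha, D^1_\alpha$ is trimmed to $D^i_\alpha\setminus\overline{\bigcup_{\beta<\alpha}D^{\prime i}_\beta}$, and the key point is that the two trimmed sets always have the same closure, so the final unions $D^0,D^1$ are disjoint while remaining dense in every $X_\alpha$ and hence in the union. You instead first record that resolvability passes to nonempty open subspaces, then apply Zorn's lemma to obtain a maximal pairwise disjoint family $\mathcal{F}$ of resolvable subspaces: maximality together with open-hereditariness forces $\bigcup\mathcal{F}$ to be dense, and since the members of $\mathcal{F}$ are pairwise disjoint the unions of the chosen dense pairs are disjoint and dense with no further bookkeeping. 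Each approach buys something: yours avoids the slightly delicate closure computation of the recursion (at the price of the auxiliary open-subspace lemma and an explicit appeal to Zorn rather than a well-ordering), and it meshes naturally with Corollary \ref{subOHI}, where a maximal resolvable subspace is exactly what is wanted; the paper's recursion is more hands-on and exhibits the disjointness of the two dense sets directly from the construction. Your closing remark that crowdedness of the union is inherited from the cover is the right thing to say, since the paper's definition of resolvability presupposes a crowded space.
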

\begin{proof} Let $\{X_\alpha:\alpha<\kappa\}$ be an enumeration of the resolvable topological spaces and for $\alpha<\kappa$ let $D_\alpha^i$, $i=0,1$ be
disjoint sets dense in $X_\alpha$. Define $D_{0}^{\prime i}=D_0^i$ and for
$i=0,1$ let
\begin{displaymath}
D_\alpha^{\prime i}=D_\alpha^i\setminus \overline{\bigcup_{\beta<\alpha}
D_\beta^{\prime i}}
\end{displaymath}
Let $D^i=\bigcup_{\alpha<\kappa}D_\alpha^{\prime i}$.
Since the $D_\alpha^{\prime i}$'s are dense in $X_\alpha$ necessarily
\begin{displaymath}
 \overline{\bigcup_{\beta<\alpha} D_\beta^{\prime 0}}=\overline{\bigcup_{\beta<\alpha} D_\beta^{\prime 1}}
\end{displaymath}
for every $\alpha<\kappa$ and we can conclude that $D^0$ is disjoint
from $D^1$. Both are dense in every $X_\alpha$ so also in their union, so
their union is resolvable.
\end{proof}

\begin{cor}\label{subOHI} Any irresolvable, not OHI space
contains a maximal (w.r.t. inclusion) resolvable subspace. Its complement is
open hereditarily irresolvable.
\end{cor}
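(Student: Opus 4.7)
The plan is to take $R$ to be the union of all resolvable subspaces of $X$, observe via Lemma \ref{unionRES} that $R$ is itself resolvable --- hence inclusion-maximal by construction --- and then argue that the complement $C := X \setminus R$ must be OHI, since any resolvable open subspace of $C$ would, together with $R$, produce a strictly larger resolvable subspace of $X$.

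Nontriviality of $R$ follows from the hypothesis that $X$ is not OHI, which supplies a nonempty resolvable open subspace (crowded because $X$ is). Fix disjoint dense subsets $D^0, D^1 \subseteq R$ witnessing resolvability of $R$. For the OHI conclusion, take any nonempty open $V \subseteq C$: the irresolvability half is immediate, because a pair of disjoint dense subsets of a crowded $V$ would make $V$ itself a resolvable subspace of $X$, forcing $V \subseteq R$ against $V \subseteq C$.

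The one substantive point is verifying that open subsets of $C$ inherit crowdedness, which the definition of irresolvability requires. Suppose for contradiction $y \in V$ were isolated in $V$; then some $W$ open in $X$ satisfies $W \cap C = \{y\}$, i.e. $W \setminus \{y\} \subseteq R$. Crowdedness of $X$ makes $W$ crowded, and the restrictions $D^0 \cap W, D^1 \cap W$ are disjoint and I claim dense in $W$ --- dense in $W \setminus \{y\}$ because $W \setminus \{y\}$ is open in $R$ and $D^i$ is dense in $R$, and then dense at $y$ because crowdedness of $W$ forces every neighborhood of $y$ in $W$ to meet $W \setminus \{y\}$. Thus $W$ is a resolvable subspace of $X$, so $W \subseteq R$, contradicting $y \in W \cap C$. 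This step of propagating resolvability of $R$ across a single missing point is the only place where the argument goes beyond bookkeeping with Lemma \ref{unionRES}.
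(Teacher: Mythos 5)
Your proof is correct and follows the same skeleton as the paper's: $R$ is the union of all resolvable subspaces, Lemma \ref{unionRES} makes it resolvable and hence inclusion-maximal, and any resolvable open subspace of the complement would contradict that maximality. The one step you handle differently is the crowdedness of open subsets of $C=X\setminus R$: the paper disposes of it by observing that $R$ is \emph{closed} (the closure of a resolvable space is resolvable, since disjoint dense subsets of $A$ remain disjoint and dense in $\overline{A}$, which is still crowded), so $C$ is open in the crowded space $X$ and the issue disappears; your argument instead propagates the dense pair $D^0,D^1$ across the single putative isolated point $y$ to make its neighborhood $W$ resolvable. Your step is a correct localized version of the same closure fact (it in effect shows $y\in\overline{W\cap R}$ forces $W\subseteq R$), so both routes work; the paper's observation is slightly more economical and yields a bit more, namely that \emph{no} subspace of $C$, open or not, is resolvable. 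The only detail worth adding to your write-up is the remark that $C\neq\emptyset$, which follows since $X$ is irresolvable while $R$ is resolvable, so $R$ is a proper subspace.
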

\begin{proof}
Suppose $X$ is non OHI, there is an subset $A$ of $X$
which is resolvable. By the previous lemma the union $R$ of all resolvable
subspaces of $X$ containing $A$ is a resolvable proper (since $X$ is
irresolvable) subspace of $X$. Notice that $R$ must be closed because the
closure of a~resolvable space is resolvable. Suppose $B\subseteq X\setminus R$
is resolvable. Then $R\cup B$ is resolvable, a~contradiction with the definition
of $R$.
\end{proof}

\subsection{Characterization theorems}
We shall now make precise the statement from the beginning of this section that
certain properties in fact characterize lonely points in $\omega^*$:

\begin{thm}\label{equiv1} If $X$ is a countable OHI space with $p\in X^*$ a
remote point which is a weak P-point of $X^*$, then $p$ is a lonely point in
$\beta X$.
\end{thm}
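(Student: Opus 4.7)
I would verify the three conditions of Definition~\ref{lonelyDef} in turn.

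Condition (ii) is immediate: $X$ itself is a countable crowded set, and since $X$ is dense in $\beta X$ while $p \in X^*$, the set $X$ witnesses (ii).

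For condition (i), suppose for contradiction that $D \subseteq \beta X$ is a countable discrete set with $p$ a limit point of $D$. Split $D = D_X \cup D_*$, with $D_X := D \cap X$ and $D_* := D \cap X^*$. Since $X^*$ is closed in $\beta X$ and $p$ is a weak P-point of $X^*$, $p \notin \overline{D_*}$, so $p \in \overline{D_X}$. But $D_X$ is a discrete subset of $X$, and in a $T_1$ crowded space every discrete subset is nowhere dense: if $U \subseteq \overline{D_X}$ were a nonempty open set, taking $d \in D_X \cap U$ with a witnessing open $W$ such that $W \cap D_X = \{d\}$, the set $(W \cap U) \setminus \{d\}$ would be a nonempty open subset of $U$ (by $T_1$ and crowdedness) disjoint from $D_X$, contradicting density. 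Remoteness of $p$ then forces $p \notin \overline{D_X}$, a contradiction.

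The heart of the argument is (iii). Let $A, B \subseteq \beta X$ be countable with $p \in \overline{A} \cap \overline{B}$; the goal is $p \in \overline{A \cap B}$. Splitting $A_X := A \cap X$, $A_* := A \cap X^*$ and similarly for $B$, the weak P-point property reduces us to $p \in \overline{A_X} \cap \overline{B_X}$. Introduce the open sets
\[
U_A := \mathrm{int}_X \overline{A_X}^X, \qquad U_B := \mathrm{int}_X \overline{B_X}^X,
\]
and decompose $A_X = A_X^d \cup A_X^n$ with $A_X^d := A_X \cap U_A$ (dense in $U_A$) and $A_X^n := A_X \setminus U_A$ (nowhere dense in $X$, since its closure is contained in $\overline{A_X} \setminus U_A$, whose interior would have to sit simultaneously inside and outside $U_A$). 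Similarly for $B$. Remoteness of $p$ kills the nowhere dense parts, so $p \in \overline{A_X^d} \cap \overline{B_X^d} \subseteq \overline{U_A} \cap \overline{U_B}$.

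Now I would invoke Theorem~\ref{extDisc}: $p$ cannot lie in the closure of two disjoint open subsets of $X$. Applied to the disjoint open sets $U_A \setminus \overline{U_B}^X$ and $U_B$, together with $p \in \overline{U_B}$, this gives $p \notin \overline{U_A \setminus \overline{U_B}^X}$. Since $U_A$ is open, $U_A \cap \overline{U_B}^X \subseteq \overline{U_A \cap U_B}$, and combining with $p \in \overline{U_A}$ one concludes $p \in \overline{U_A \cap U_B}$. Hence for every open neighborhood $W$ of $p$ in $\beta X$, the set $V := W \cap U_A \cap U_B$ is a nonempty open subset of $X$. Both $A_X^d \cap V$ and $B_X^d \cap V$ are dense in $V$, and $V$ is irresolvable by the OHI hypothesis on $X$, so these two dense sets must meet. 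This places a point of $A_X \cap B_X \subseteq A \cap B$ inside $W$, proving $p \in \overline{A \cap B}$.

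The main obstacle is the twofold use of remoteness in step (iii): first to strip the nowhere dense parts of $A_X$ and $B_X$ and reduce to the open sets $U_A$ and $U_B$, and then, via extremal disconnectedness at $p$, to move $p$ from the individual closures $\overline{U_A}, \overline{U_B}$ into $\overline{U_A \cap U_B}$. Once this geometric fact is in place, the OHI assumption takes over and forces the two dense subsets of $V$ to intersect, yielding uniqueness of access essentially for free.
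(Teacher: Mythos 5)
Your proof is correct and follows essentially the same route as the paper's: the weak P-point hypothesis reduces everything to subsets of $X$, remoteness strips away the nowhere dense parts and (via Theorem~\ref{extDisc}) forces the interiors of the closures to meet, and OHI makes the two dense traces intersect, with condition (i) handled by the same ``discrete subsets are nowhere dense'' observation. The one place you go beyond the paper is worth keeping: by localizing to an arbitrary neighborhood $W$ of $p$ and first showing $p\in\overline{U_A\cap U_B}$ rather than merely $U_A\cap U_B\neq\emptyset$, you actually conclude $p\in\overline{A\cap B}$, whereas the paper's proof as written stops at $D_0\cap D_1\neq\emptyset$, which is formally weaker than what unique accessibility demands.
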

\begin{proof} We first prove that the point is uniquely accessible.
Suppose $D_0,D_1\subseteq\beta X$ are two countable sets with
$p\in\cl{D_0}\cap\cl{D_1}$. Then, because $p$ is a weak P-point of $X^*$
$p\in\cl{D_0\cap X}\cap\cl{D_1\cap X}$. Because $p$ is remote,
$p\in\cl{\intr{\cl{D_0\cap X}}}\cap\cl{\intr{\cl{D_1\cap X}}}$. Again, because
$p$ is remote it cannot be in the closure of two disjoint open sets
(\ref{extDisc}), so $\intr{\cl{D_0\cap X}}\cap\intr{\cl{D_1\cap
X}}=G\neq\emptyset$, but now, since $X$ is OHI and $D_0,D_1$ are both dense in
$G$, we have that $D_0\cap D_1\neq\emptyset$.

An OHI space is crowded so condition (ii) of \ref{lonelyDef} is also satisfied and condition (i)
follows from the fact that discrete subsets of OHI spaces are nowhere dense (see [\cite{maximal}, 1.13]).
\end{proof}

On the other hand:

\begin{thm}\label{equiv2} If $p\in\omega^*$ is a lonely point then there is a
countable, extremally disconnected OHI space $X$ with a remote point which is a
weak P-point of $X^*$.
\end{thm}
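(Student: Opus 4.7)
The plan is to extract $X$ from the lonely structure of $p$ by refining a witness for (ii) of Definition~\ref{lonelyDef}. By (ii), fix a countable crowded $S\subseteq\omega^*$ with $p\in\overline{S}\setminus S$. The observation at the start of this section then gives: $p$ is not a limit of any nowhere dense subset of $S$ (its part (a)), $p$ is a weak P-point of $\overline{S}\setminus S$ (its part (b)), and $S$ is irresolvable (its part (c)); these properties will travel through the refinement and eventually yield the remote weak P-point.

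First I would upgrade $S$ to be OHI. A stronger form of (c) holds via unique accessibility: no resolvable subset $R\subseteq S$ has $p$ in its $\omega^*$-closure, because if $D_0,D_1$ are disjoint dense subsets of $R$ then $p\in\overline{R}\subseteq\overline{D_0}\cap\overline{D_1}$, and (iii) of Definition~\ref{lonelyDef} forces $p\in\overline{D_0\cap D_1}=\overline{\emptyset}=\emptyset$, absurd. Applying Corollary~\ref{subOHI}, let $R$ be the maximal resolvable subspace of $S$; pick a clopen $A^*\ni p$ in $\omega^*$ disjoint from $\overline{R}$ and set $S_1=S\cap A^*$. Then $S_1\subseteq S\setminus R$ is countable, crowded, OHI, and still has $p\in\overline{S_1}$.

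The main obstacle I anticipate is to make $S_1$ also extremally disconnected, since OHI does not imply ED. My plan is to arrange $S_1$ to sit as a dense subspace of a clopen subset of $\omega^*$: clopens of $\omega^*$ are ED (open subsets of an ED space), and a dense subspace of an ED space is ED, so this would give ED of $S_1$ for free. The delicacy lies in preserving countability, OHI, and the limit property at $p$ simultaneously; I would attempt an iterative clopen-refinement procedure, at each stage killing a pair of disjoint open sets in $S_1$ whose closures meet, and use unique accessibility to keep $p$ as a limit point throughout. An alternative path is to pass to the Gleason cover of $\overline{S_1}^{\omega^*}$ and choose a countable OHI dense subset there together with a lift of $p$.

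With $X$ in hand, I would verify the remote weak P-point property via the canonical continuous extension $e:\beta X\to\overline{X}^{\omega^*}$ of the inclusion. Pick a preimage $p'\in e^{-1}(p)\subseteq\beta X\setminus X$. Remoteness of $p'$ is immediate from (a) applied to $X$ together with continuity of $e$: if $N\subseteq X$ is nowhere dense in $X$ and $p'\in\overline{N}^{\beta X}$, then $p=e(p')\in\overline{N}^{\omega^*}$, contradicting (a). The weak P-point property of $p'$ in $X^*$ then follows from (b), provided $p'$ is chosen so that other points of the fiber $e^{-1}(p)$ do not accumulate on it; unique accessibility is what controls the structure of this fiber and permits such a choice.
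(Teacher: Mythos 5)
Your first half follows the paper's own route: take a countable (crowded) $S$ with $p\in\overline{S}\setminus S$, use unique accessibility to see that $p$ avoids the closure of every resolvable subset, and invoke Corollary~\ref{subOHI} to pass to an OHI subspace still having $p$ in its closure. That part is fine. The second half, however, has two genuine gaps, both stemming from the same omission: you are missing the standard facts about countable subspaces of $\omega^*$ that the paper leans on. First, \emph{every} countable subspace of $\omega^*$ is already extremally disconnected (in a countable regular space every open set is an $F_\sigma$, hence a cozero set, and $\omega^*$ is an $F$-space, so disjoint open subsets of $S_1$ have disjoint closures). So the ``main obstacle'' you anticipate does not exist, and the machinery you propose to overcome it is not actually carried out: the iterative clopen-refinement is not shown to terminate or to preserve crowdedness, OHI, and $p\in\overline{S_1}$ simultaneously, and passing to the Gleason cover of $\overline{S_1}$ replaces the space by a different one, severing the link to the concrete point $p\in\omega^*$ that you need for the remaining properties.

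Second, and more seriously, your verification of the weak P-point property is left open. You work with the canonical map $e:\beta X\to\overline{X}$ and concede that the argument works only ``provided $p'$ is chosen so that other points of the fiber $e^{-1}(p)$ do not accumulate on it,'' with an appeal to unique accessibility that is never made precise; unique accessibility is a property of $p$ in $\omega^*$ and says nothing directly about the fiber structure of $e$ over $p$. The paper's resolution is again a standard fact: a countable subset of $\omega^*$ is $C^*$-embedded (van Mill, \emph{Handbook of Set-Theoretic Topology}, 1.5.2), so $\overline{X}\approx\beta X$, the map $e$ is a homeomorphism, the fiber is a singleton, and the weak P-point property of $p$ in $X^*$ is exactly part (b) of the observation, which itself follows from unique accessibility (a countable $D\subseteq\overline{X}\setminus X$ with $p\in\overline{D}$ would force $p\in\overline{D\cap X}=\overline{\emptyset}$). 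Without the $C^*$-embedding, your argument as written does not close. Your remoteness argument via (a) and continuity of $e$ is correct.
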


\begin{proof}
 Let $X\subseteq \omega^*\setminus\{p\}$ be a countable set with $p\in\cl{X}$.
 Since $p$ is a lonely point $X$ must be irresolvable. By \ref{subOHI}
 we may assume $X$ is OHI (since $p$ cannot be in the closure of any resolvable
 subspace of $X$). Since $X$ is a countable subset of $\omega^*$ it is
 extremally disconnected. By (\cite{HST}, 1.5.2) it is $C^*$-embedded in
 $\omega^*$ so $\cl{X}\approx\beta X$. Because $p$ is a lonely point, it must
 be a weak P-point of $\cl{X}\setminus X\approx X^*$ and a remote point of $X$.
\end{proof}

Theorems \ref{equiv1}, \ref{equiv2} and \ref{simon} together give the
following characterization:

\begin{thm}\label{char1} There is a lonely point in $\omega^*$ iff
there is a countable extremally disconnected OHI space $X$ with a remote point
$p$ which is a weak P-point of $X^*$.
\end{thm}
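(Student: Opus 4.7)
The plan is simply to combine the three preceding theorems, so the proof is a short bookkeeping argument with no new content.

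For the forward direction, suppose $p \in \omega^*$ is lonely. Then Theorem~\ref{equiv2} applies verbatim and produces a countable extremally disconnected OHI space $X$ (constructed as a subset of $\omega^*$ with $p$ in its closure, after discarding its maximal resolvable part via Corollary~\ref{subOHI}) together with a point of $X^*$ that is simultaneously remote and a weak P-point. So nothing more is needed here.

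For the reverse direction, suppose $X$ is a countable extremally disconnected OHI space carrying a remote weak P-point $p \in X^*$. First apply Theorem~\ref{equiv1} to conclude that $p$ is a lonely point of $\beta X$. Then transfer $p$ into $\omega^*$: since $X$ is extremally disconnected so is $\beta X$ (as remarked at the end of Section~2), $\beta X$ is compact, and because $X$ is countable $\beta X$ has weight at most $\cont$. Hence Theorem~\ref{simon} embeds $\beta X$ into $\omega^*$ as a closed weak P-set. Finally, the observation immediately following Theorem~\ref{simon} says that loneliness is inherited upward from a weak P-set, so the image of $p$ is lonely in $\omega^*$.

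There is no genuine obstacle; the only thing to be careful about is matching hypotheses: verifying that the weight bound required by Simon's embedding theorem holds for $\beta X$ (which is automatic from countability of $X$) and that extremal disconnectedness transfers from $X$ to $\beta X$, both of which are standard. Everything else is a direct invocation of \ref{equiv1}, \ref{equiv2}, \ref{simon}, and the preservation observation.
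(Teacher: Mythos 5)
Your proof is correct and follows exactly the route the paper intends: the paper gives no separate argument for Theorem~\ref{char1}, stating only that Theorems~\ref{equiv1}, \ref{equiv2} and \ref{simon} together yield it, and your bookkeeping (including the weight bound and the preservation observation for weak P-sets) is precisely the intended combination.
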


From the proofs we see that the following is also true:

\begin{thm}\label{char2} If there is a countable extremally disconnected OHI
space $X$ with a remote point then there is a countable
set $S\subseteq\omega^*$ and $p\in\cl{S}$ such that
$(\omega^*\setminus (\cl{S}\setminus {S}))\cup\{p\}$ contains a lonely point.
\end{thm}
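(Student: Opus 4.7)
The plan is to use Simon's embedding theorem to move $\beta X$ into $\omega^*$ as a closed weak P-set and then excise $X^*$ from the ambient space except for the distinguished remote point, so that the weak P-point hypothesis present in Theorem \ref{equiv1} is no longer required. Concretely, I embed $\beta X$ into $\omega^*$ via Theorem \ref{simon}, identify $\beta X$ with its image, let $S$ be the image of the countable dense set $X$ and let $p$ be the image of the remote point. Then $\cl{S}=\beta X$ and $\cl{S}\setminus S=X^*$. Set $Y:=(\omega^*\setminus(\cl{S}\setminus S))\cup\{p\}$; I claim $p$ is lonely in $Y$.

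The key observation is that the weak P-set property of $\beta X$ neutralises everything outside it. If $D\subseteq Y$ is countable with $p\in\cl{D}$, split $D=(D\cap\beta X)\cup(D\setminus\beta X)$; the second piece is countable and disjoint from $\beta X$, hence by the weak P-set property its closure misses $p$, so $p\in\cl{D\cap\beta X}$ and $D\cap\beta X\subseteq Y\cap\beta X=S\cup\{p\}$. Condition (ii) of \ref{lonelyDef} is immediate because $S$ is a countable crowded subset of $Y$ with $p\in\cl{S}$. For (i), a countable discrete $D\subseteq Y$ accumulating at $p$ reduces in this way to a discrete, hence (by \cite{maximal}~1.13) nowhere dense, subset of $X$, contradicting remoteness of $p$. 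For (iii), given countable $D_0,D_1\subseteq Y$ with $p$ a limit of each, set $E_i:=D_i\cap S$; then $p\in\cl{E_i}$ and I copy the argument of \ref{equiv1}: remoteness gives $p\in\cl{\intr{\cl{E_0}}}\cap\cl{\intr{\cl{E_1}}}$, Theorem \ref{extDisc} forces the two interiors to meet in a common open $G$, and OHI applied to $E_i\cap U$ for each basic neighbourhood $U$ of $p$ yields $p\in\cl{E_0\cap E_1}\subseteq\cl{D_0\cap D_1}$.

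The main obstacle, and really the whole content of the theorem as opposed to Theorem \ref{equiv1}, is that without assuming $p$ to be a weak P-point of $X^*$, countable subsets of $X^*$ may cluster at $p$ inside $\beta X$ and remoteness offers no control over them, so the argument of \ref{equiv1} cannot be carried out in $\beta X$ (much less in $\omega^*$) itself. Surgically removing $X^*\setminus\{p\}$ from $\omega^*$ is what bypasses this, and Simon's weak P-set embedding is precisely what ensures that no new countable clustering at $p$ is introduced from the complementary part $\omega^*\setminus\beta X$.
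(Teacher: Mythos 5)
Your proof is correct and is essentially the argument the paper intends: the paper gives no separate proof of Theorem \ref{char2}, remarking only that it follows ``from the proofs'' of Theorems \ref{simon} and \ref{equiv1}, and the intended reasoning is exactly your embed-via-Simon, excise $\cl{S}\setminus S$ except for $p$, and use the weak P-set property to reduce all countable sets to subsets of $S$, after which the remoteness/OHI argument of Theorem \ref{equiv1} applies verbatim. If anything, your relativisation to neighbourhoods of $p$ to get $p\in\cl{D_0\cap D_1}$ (rather than mere nonemptiness of the intersection) is spelled out more carefully than in the paper's proof of Theorem \ref{equiv1}.
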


Theorem \ref{char2} will be used in the last section to answer the second question.

\section{Constructing OHI spaces}
In view of the characterization theorem we are interested in extremally
disconnnected OHI spaces. Here we present a tool for constructing such
spaces based on an idea of Hewitt (\cite{Hewitt}).

\begin{definition} If P is a property of a space we say that $(X,\tau)$
 is \emph{maximal P} if it has P and for any topology $\sigma$ finer than
$\tau$ the space $(X,\sigma)$ does not have P.
\end{definition}

We will need the following theorem:

\begin{thm}\label{compl} Every maximal crowded and zerodimensional space is
extremally disconnected.
\end{thm}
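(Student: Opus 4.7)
The plan is to use the standard maximality trick: assume $U\in\tau$ is an open set whose closure $\cl U$ is not open, then refine $\tau$ by forcing $\cl U$ to become clopen, and derive a contradiction by showing the refinement is still crowded and zerodimensional.

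Concretely, let $\sigma$ be the topology on $X$ generated by the subbase $\tau\cup\{\cl U\}$. Since $X\setminus\cl U\in\tau\subseteq\sigma$, $\cl U$ is $\sigma$-closed, so once it becomes $\sigma$-open it is $\sigma$-clopen. Basic $\sigma$-open sets have the form $V$ or $V\cap\cl U$ for $V\in\tau$. If $\sigma=\tau$ then $\cl U$ is $\tau$-open, which is what we want; so it suffices to verify that $(X,\sigma)$ is crowded and zerodimensional, and then invoke maximality.

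For zerodimensionality, take a point $x$ and a basic $\sigma$-neighborhood. If it is $V\in\tau$, use the $\tau$-zerodimensionality to shrink to a $\tau$-clopen neighborhood, which is also $\sigma$-clopen. If it is $V\cap\cl U$ with $x\in\cl U$, shrink $V$ inside $\tau$ to a $\tau$-clopen $W$; then $W\cap\cl U$ is both $\sigma$-open (by construction) and $\sigma$-closed (intersection of two $\tau$-closed, hence $\sigma$-closed, sets), so it is the desired clopen neighborhood.

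The main obstacle is crowdedness of $(X,\sigma)$, because that is where the hypothesis that $U$ be open (not merely any set) is actually used. Suppose for contradiction some $x$ were $\sigma$-isolated. Since $\tau$ is $T_1$ and crowded, no $\tau$-open set is a singleton, so the witnessing basic set must be of the form $W\cap\cl U=\{x\}$ with $W\in\tau$ and $x\in\cl U$. Then $W$ is a $\tau$-neighborhood of $x\in\cl U$, hence $W\cap U\neq\emptyset$; but $U\subseteq\cl U$ forces $W\cap U\subseteq W\cap\cl U=\{x\}$, so $W\cap U=\{x\}$. This shows $\{x\}$ is $\tau$-open (being a nonempty $\tau$-open subset of $W$ intersected with the $\tau$-open set $U$), contradicting $\tau$-crowdedness.

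Thus $(X,\sigma)$ is crowded and zerodimensional; maximality of $\tau$ forces $\sigma=\tau$, so $\cl U\in\tau$. Since $U$ was arbitrary, $X$ is extremally disconnected.
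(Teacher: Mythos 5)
Your proof is correct and is essentially the paper's argument made explicit: the paper also refines the topology by one new set (it phrases this as adding the complement of a non-clopen regular open set, which is exactly a set of the form $\cl{U}$) and appeals to preservation of crowdedness and zerodimensionality to contradict maximality. Your write-up just supplies the verifications that the paper leaves as a sketch, including the key point that $W\cap\cl{U}=\{x\}$ would force the $\tau$-open set $W\cap U$ to be the singleton $\{x\}$.
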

\begin{proof} It is enough to show that any regular open set must be clopen.
But if it were not, one could refine the topology by adding the complement of
the set. The finer topology would still be crowded and zerodimensional and this
would contradict maximality.
\end{proof}

Coupled with a theorem of Hewitt:

\begin{thm}[\cite{Hewitt}]\label{hewitt} Any maximal crowded and zerodimensional
topology is OHI.
\end{thm}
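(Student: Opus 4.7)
The plan is to argue by contradiction in the spirit of the proof of Theorem~\ref{compl}: suppose $(X,\tau)$ is maximal crowded zerodimensional but fails to be OHI, so there is a nonempty $\tau$-open $U\subseteq X$ and disjoint sets $D_0,D_1\subseteq U$ both $\tau$-dense in $U$. I will then build a strictly finer topology $\sigma$ on $X$ that is still crowded and zerodimensional, contradicting maximality. The natural choice is to force $D_0$ to be clopen: let $\sigma$ be the topology generated by $\tau\cup\{D_0\}$, so that its basic open sets are precisely $V\cap D_0$ and $V\cap(X\setminus D_0)$ for $V\in\tau$ (together with the $\tau$-open sets themselves, which are unions of these two kinds).

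Next I would verify the two easy properties. For $\sigma\neq\tau$: if $D_0$ were already $\tau$-open, then $D_0\cap U$ would be nonempty $\tau$-open and disjoint from $D_1$, contradicting the density of $D_1$ in $U$. For zerodimensionality: since $\tau$ is zerodimensional we may assume the $V$ above is $\tau$-clopen; then the complement of $V\cap D_0$ is $(X\setminus V)\cup(V\cap(X\setminus D_0))$, which is $\sigma$-open by construction, so $V\cap D_0$ is $\sigma$-clopen, and symmetrically for $V\cap(X\setminus D_0)$. Hence $(X,\sigma)$ has a clopen basis.

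The main obstacle is showing $(X,\sigma)$ has no isolated points, which is where the density of both $D_0$ and $D_1$ in $U$ is essential. I would split into cases according to where a point $x\in X$ sits. If $x\in D_0$ then $x\in U$, so given any $\tau$-open $V\ni x$ we may replace $V$ by $V\cap U$ and use that $D_0$ is dense in the nonempty $\tau$-open set $V\cap U$ together with the fact that $V\cap U$ is crowded in $\tau$ (hence infinite) to conclude that $V\cap D_0$ contains a point other than $x$. The same argument using $D_1$ handles $x\in U\setminus D_0$, since the relevant basic $\sigma$-neighborhoods are of the form $V\cap(X\setminus D_0)\supseteq V\cap D_1$ for $V\subseteq U$. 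For $x\notin U$, the basic $\sigma$-neighborhoods are again $V\cap(X\setminus D_0)$; if $V\cap U\neq\emptyset$ density of $D_1\subseteq X\setminus D_0$ in $U$ yields another point, and if $V\cap U=\emptyset$ then $V\subseteq X\setminus D_0$ and crowdedness of $(X,\tau)$ supplies one.

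Putting these three points together gives a topology $\sigma\supsetneq\tau$ which is still crowded and zerodimensional, contradicting the maximality of $\tau$. The only delicate point is the case analysis in the crowdedness verification for points on the $\tau$-boundary of $U$, which is precisely the case the hypothesis that $D_0$ and $D_1$ are both dense in $U$ was designed to cover.
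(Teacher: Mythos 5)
The paper does not prove this statement at all --- it is quoted as a theorem of Hewitt with only a citation --- so there is no internal proof to compare against; what you have written is a self-contained argument in the spirit of the paper's proof of Theorem~\ref{compl}, and it is correct. Your contradiction setup, the verification that $\sigma$ is strictly finer (via density of $D_1$), the clopen-basis argument, and the three-case crowdedness check (including the boundary case $x\notin U$, which is indeed the only delicate spot) all go through; the crowdedness cases also implicitly use that $X$ is $T_1$ so that $V\setminus\{x\}$ is again open, which is covered by the paper's standing $T_{3\frac12}$ assumption. One small wording slip: the topology generated by $\tau\cup\{D_0\}$ alone need not make $X\setminus D_0$ open, so to get the basis you describe (and hence zerodimensionality) you must take $\sigma$ to be generated by $\tau\cup\{D_0,\,X\setminus D_0\}$; since you explicitly say you are forcing $D_0$ to be \emph{clopen} and you work with exactly that basis throughout, this is clearly your intent and does not affect the argument. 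Your proof is essentially the standard route to Hewitt's theorem and fits naturally alongside Theorem~\ref{compl}, whose proof uses the same ``refine and contradict maximality'' device with a regular open set in place of a dense set.
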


we get:

\begin{thm}\label{refine} Any zerodimensional crowded topology can be
refined to an OHI extremally disconnected, zerodimensional crowded topology.
\end{thm}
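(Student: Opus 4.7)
The plan is to apply Zorn's lemma to the poset of crowded zerodimensional topologies refining the given one, and then invoke Theorems \ref{compl} and \ref{hewitt} on the resulting maximal element. So let $(X,\tau_0)$ be the given crowded zerodimensional space, and let $\mathcal{T}$ be the collection of all crowded zerodimensional topologies on $X$ that refine $\tau_0$, partially ordered by inclusion. This is nonempty since $\tau_0 \in \mathcal{T}$.

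The heart of the argument is verifying that every chain $\mathcal{C} \subseteq \mathcal{T}$ has an upper bound in $\mathcal{T}$. The natural candidate is the topology $\tau^{*}$ generated by $\bigcup\mathcal{C}$ as a subbase. The key observation (and where the chain hypothesis is used) is that any finite collection of sets drawn from $\bigcup\mathcal{C}$ already lies inside a single $\tau\in\mathcal{C}$, so the family $\bigcup\mathcal{C}$ itself is actually a base for $\tau^{*}$. From this one checks three things: (a) zerodimensionality, because a set clopen in a coarser topology remains clopen in any finer one (both it and its complement stay open), hence the union of the clopen bases of the $\tau\in\mathcal{C}$ is a clopen base for $\tau^{*}$; (b) crowdedness, because if $\{x\}$ were open in $\tau^{*}$ it would already be open in some $\tau\in\mathcal{C}$, contradicting that $\tau$ is crowded; and (c) refinement of $\tau_0$, which is immediate.

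Having produced the upper bound, Zorn yields a maximal element $\tau_{\max}\in\mathcal{T}$. By its very membership and maximality, $(X,\tau_{\max})$ is a maximal crowded zerodimensional space, so Theorem \ref{compl} gives that it is extremally disconnected and Theorem \ref{hewitt} gives that it is OHI, finishing the proof.

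The proof is essentially a routine Zorn's lemma argument; there is no serious obstacle. The only point that deserves attention is the verification that the supremum of a chain of crowded zerodimensional topologies is again crowded and zerodimensional, and this rests on the mildly pleasant fact that along a chain of topologies finite intersections of opens do not leave any single member, so the subbase generated by the union already serves as a base.
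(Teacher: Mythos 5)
Your proof is correct and follows exactly the route the paper intends: the paper derives Theorem \ref{refine} by combining Theorems \ref{compl} and \ref{hewitt}, leaving the Zorn's lemma step implicit, and your argument supplies precisely that step (including the correct observation that along a chain the union of the topologies is already a base, so crowdedness and zerodimensionality pass to the upper bound). Nothing to add.
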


\section{The Main Theorem}

To prove the existence of lonely points in $\omega^*$ we would need an extremally
disconnected space with a remote point which is a weak P-point. Since the
weak P-point property is hard to achieve we want at least
remoteness to be able to use \ref{char2}. The theorem \ref{refine} from the previous
section suggests that we build a space with a remote closed filter and then refine the
topology to make the space extremally disconnected OHI. Unfortunately when refining the
topology new n.w.d. sets could appear and kill the remoteness of the filter. We
need a stronger version of remoteness:

\begin{definition} A closed filter $\F$ on $X$ is \emph{strongly remote} if for
any set $A\subseteq X$ with empty interior there is an $F\in\F$ which is
disjoint from $A$.
\end{definition}

It is easy to see that a strongly remote filter is a remote filter and also
that a strongly remote filter is strongly remote in any finer topology. This is
the key property, since if we build a strongly remote filter we can then use
the theorem \ref{refine} without loosing remoteness. This section will be
devoted to a single theorem which will give us a strongly remote filter on a
suitable space:

\begin{thm}\label{main}
There is a crowded, $T_2$, zerodimensional topology $\tau$ on
$\omega$ and a strongly remote filter on $\omega$ with this topology.
\end{thm}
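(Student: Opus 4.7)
The plan is to build the topology $\tau$ and the filter $\F$ simultaneously by a transfinite recursion of length $\cont$. Before starting, note that $(\omega,\tau)$ must be irresolvable: two disjoint dense subsets $D_0,D_1$ would force any closed $F_i\in\F$ with $F_i\subseteq D_i$ (such $F_i$ exist by strong remoteness, since each $D_i$ is the complement of an empty-interior set) to be disjoint, violating the FIP. In particular $\tau$ cannot be the usual topology $\sigma$ of $\mathbb{Q}$. I would start with $\tau_0=\sigma$ on $\omega$ (identifying $\omega$ with $\mathbb{Q}$, giving the crowded $T_2$ zerodimensional base case) and refine it in $\cont$-many stages.

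Enumerate $\mathcal{P}(\omega)=\{A_\alpha:\alpha<\cont\}$. Inductively construct crowded $T_2$ zerodimensional topologies $\tau_\alpha$ and a filter base $\{F_\beta:\beta<\alpha\}$ of $\tau_\alpha$-clopen infinite sets with FIP. At stage $\alpha$ examine $A_\alpha$. If $A_\alpha$ already has nonempty $\tau_\alpha$-interior then, since interiors only grow under refinement, $A_\alpha$ will have nonempty interior in $\tau=\bigcup_{\alpha<\cont}\tau_\alpha$ and the strong remoteness condition is vacuous for $A_\alpha$; set $F_\alpha=\omega$, $\tau_{\alpha+1}=\tau_\alpha$. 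Otherwise $\omega\setminus A_\alpha$ is $\tau_\alpha$-dense, and I would use this density together with a fatness invariant maintained on previous finite intersections to select a set $F_\alpha\subseteq\omega\setminus A_\alpha$ that (a) meets every finite intersection of previous $F_\beta$'s in an infinite set, and (b) can be declared clopen in a refinement $\tau_{\alpha+1}$ of $\tau_\alpha$. The refinement adds $F_\alpha$ and $\omega\setminus F_\alpha$ to the clopen basis; this preserves $T_2$ and zerodimensionality automatically, and thanks to the infiniteness condition in (a) it preserves crowdedness (no previously nondegenerate clopen is reduced to a singleton in $\tau_{\alpha+1}$). At limits I take unions. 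Finally $\F$ is the filter generated by $\{F_\alpha:\alpha<\cont\}$, and strong remoteness follows since any $A\subseteq\omega$ with empty $\tau$-interior has empty $\tau_\alpha$-interior at the stage $\alpha$ where $A=A_\alpha$ was enumerated (interiors only grow), so the second clause of the construction applied and $F_\alpha\cap A=\emptyset$.

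The hard step is the selection at stage $\alpha$: I must guarantee that none of the (potentially up to $|\alpha|$-many) finite intersections of previous $F_\beta$'s is swallowed by $A_\alpha$, since otherwise FIP for $\F\cup\{F_\alpha\}$ would fail as soon as $F_\alpha\subseteq\omega\setminus A_\alpha$. The natural invariant to maintain is that each finite intersection $\bigcap_{\beta\in\sigma}F_\beta$ meets every nonempty $\tau_\alpha$-clopen in an infinite set; combined with the $\tau_\alpha$-density of $\omega\setminus A_\alpha$, this supplies the required $F_\alpha$ by a careful diagonal construction on a countable $\pi$-base of $\tau_\alpha$. The main obstacle will be propagating this fatness invariant through limit stages, especially through $\omega_1$ and beyond, where uncountably many finite boolean combinations must simultaneously remain robust. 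This is in the same spirit as van Douwen's ZFC construction of remote points of $\beta\mathbb{Q}$, but here we need the stronger property that the whole filter (not merely one ultrafilter extending it) respects every empty-interior set, which forces the careful bookkeeping of fatness at each stage rather than only a descending chain of clopen sets.
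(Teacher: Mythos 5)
Your overall architecture (transfinite recursion of length \cont{} over an enumeration of $\pomega$, handling each $A_\alpha$ by either noting it already has interior or putting a filter element inside its complement) matches the paper's, and your preliminary observation that the space must be irresolvable is correct. But the step you yourself flag as ``the hard step'' is exactly where the one essential idea of the proof lives, and your sketch of it does not go through in ZFC. At a successor stage $\alpha$ with $\omega_1\leq\alpha<\cont$ you must choose a single $F_\alpha\subseteq\omega\setminus A_\alpha$ that splits each of up to $|\alpha|$-many sets of the form $E\cap C\cap(\omega\setminus A_\alpha)$ into two infinite pieces (and, in fact, remains independent over the whole previously committed family, so that later stages can continue). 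In $\pomega$ modulo finite this is not a ZFC fact: it is consistent that the independence number and the reaping number are strictly below \cont, in which case some family of size $<\cont$ admits no such $F_\alpha$, and your ``careful diagonal construction on a countable $\pi$-base'' has no countable $\pi$-base to work with once uncountably many clopen sets have been added. (Your stated worry about limit stages, by contrast, is a non-issue: each instance of the fatness invariant mentions only finitely many sets and is inherited from an earlier stage.)

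The paper circumvents this by \emph{not} working in $\pomega/FIN$ and \emph{not} refining the topology of $\mathbb{Q}$. It first fixes an ideal $I\supseteq FIN$ such that $\pomega/I$ is isomorphic to the completion of $Clopen(2^{\cont})$ --- possible because that algebra is $\sigma$-centred --- and this quotient provably has \emph{hereditary} independence \cont: below any nonzero $[B]_I$, any independent family of size $<\cont$ extends. The topology is then generated from scratch by an independent (mod $I$) family of clopen sets together with $I^*$; independence simultaneously yields crowdedness, the FIP of the filter, and, via heredity applied below $[\omega\setminus A_\alpha]_I$, the existence of the required filter element at every stage up to \cont. That choice of quotient algebra is the missing ingredient; without it (or some substitute such as assuming CH, under which your diagonalization does work), the recursion cannot be completed.
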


\begin{proof}

The proof of the theorem will come in several steps. First, we state a standard
definition and lemma from boolean algebras.

\begin{definition} A boolean algebra $\ba$ has \emph{hereditary independence}
$\kappa$ if whenever $b\in\ba\setminus{\0}$ and $A\in[\{a:\0<a<b\}]^{<\kappa}$
there is an element
$c\in\ba$ with $\0<c<b$ which is independent from $A$; that is $a\wedge c\neq\0\neq a\setminus c$
for all $a\in A$.
\end{definition}

\begin{notation} If $I$ is an ideal on a boolean algebra, let $I^*=\{-b:b\in I\}$.
\end{notation}

\begin{lem} There is an ideal $I$ on $\omega$, extending $FIN$ and such that
$\pomega/I$ has hereditary independence $\cont$.
\end{lem}
\begin{proof} The complete Boolean algebra $\ba=Compl(Clopen(2^{\cont}))$ has
hereditary independence $\cont$ and is $\sigma$-centered so there
is an ideal $I$ on $\omega$ such that $\ba$ is isomorphic to $\pomega/I$.
\end{proof}

By the previous lemma fix an ideal $I\supseteq FIN$ on $\omega$ such
that $\pomega/I$ has hereditary independence $\cont$. Throughout this proof we
will adopt the following notation:

\newcommand{\CO}{\mbox{${\mathcal CO}$}}

\begin{notation}
 If $I$ is an ideal on $\omega$ and $\CO$ is a system of subsets of $\omega$ let
$\tau_I(\CO)$ denote the topology generated by $\{U,\omega\setminus
U:U\in\CO\}\cup I^*$.
\end{notation}

Now let $\langle A_\alpha:\omega\leq\alpha<\cont,\alpha\ \hbox{even}\rangle$ be
an enumeration of $\pomega$,
$\langle (G_\alpha,n_\alpha):\omega\leq\alpha<\cont,\alpha\ \hbox{odd}\rangle$
an enumeration of $\{(G, n): G\in I^*, n\in G\}$ and
$\langle K_n:n<\omega\rangle$ an enumeration of $[\omega]^2$. Let
$F_0=\F_0=\CO_0=\emptyset$.

Proceed by induction constructing $F_\alpha$ (a closed filterbase), $\F_\alpha$
(a closed filter), $\CO_\alpha$ (clopen sets), $\tau_\alpha$ (topology) for
$\alpha<\cont$ such that the following is satisfied:
\begin{itemize}
 \item [(i)] $|\CO_\alpha|\cdot|F_\alpha|\leq \alpha\cdot\omega$
  for each  $\alpha<\cont$.
 \item [(ii)] If $\alpha<\cont$ is limit, then
  $F_\alpha=\bigcup_{\beta<\alpha}F_\beta$, $\CO_\alpha=\bigcup_{\beta<\alpha}
  \CO_\beta$.
 \item [(iii)] $\tau_\alpha=\tau_I(\CO_\alpha)$, $\F_\alpha$ is the
  filter generated by $F_\alpha$, $F_\alpha\subseteq\CO_\alpha$ for
  $\alpha<\cont$
 \item [(iv)] The family $\{[U]_I:U\in\CO_\alpha\}$ is independent in
  $\pomega/I$ for each $\alpha<\cont$. (to make $\tau$ crowded).
 \item [(v)] For each $n<\omega$ there is an $U\in\CO_{n+1}$ such that
  $|U\cap K_n| = 1$ (to make $\tau\ T_2$).
 \item [(vii)] If $\omega<\alpha<\cont$ is odd then there is
  $U\in\CO_{\alpha+1}$ with  $n\in U\subseteq G_\alpha$ (to make $\tau$
  zerodimensional).
 \item [(viii)] If $\omega<\alpha<\cont$ is even then either
  $\hbox{int}_{\tau_{\alpha+1}}A_\alpha\neq\emptyset$ or there
  is $F\in\F_{\alpha+1}$ which misses $A_\alpha$ (to make $\F$ strongly remote).
\end{itemize}

Suppose, that the construction can indeed be carried out. Let $\F$ be the filter
generated by $\bigcup\{F_\alpha:\alpha<\cont\}$,
$\CO=\bigcup\{CO_\alpha:\alpha<\cont\}$ and
$\tau=\tau_I(\CO)$. Then $(\omega,\tau)$ with
the closed filter $\F$ statisfy the conclusion of the proof:

The topology is zerodimensional (the definition takes care of the sets from
$\CO$, condition (vii) takes care of the sets from $I^*$).

The topology is also $T_2$ because if $x\neq y\in\omega$ then there is
$n<\omega$, such that $K_n=\{x,y\}$ and by (v) there is
$U\in\CO_n\subseteq\CO$ such that $|U\cap K_n|=1$. This $U$ is $\tau$-clopen and
separates $x$ from~$y$.

To show that $\tau$ is crowded it is sufficient to consider its basis, which
consists of elements of the form:
\begin{equation}\label{basElem}
 \bigcap_{U\in P}U \cap \bigcap_{V\in N}(\omega\setminus V) \cap G
\end{equation}
where $P,N\in[\CO]^{<\omega}, G\in I^*$. Now, by (iv) the family
$\{[U]_I:U\in\CO\}$ is
independent in $\pomega/I$ with $FIN\subseteq I$ and $G\in I^*$
so (\ref{basElem}) is finite iff there is some $U\in N\cap P$. But then
(\ref{basElem}) must be a subset of $U\cap (\omega\setminus U)$ so it must be
empty. Thus the basis does not contain any finite sets beyond the empty set, so
it is crowded as is the whole topology.

To prove that $\F$ is strongly remote, choose $O\subseteq\omega$ such that
$\hbox{int}_\tau O=\emptyset$. There is an $\alpha<\cont$, such that
$O=A_\alpha$. Then $\hbox{int}_{\tau_{\alpha+1}} A_\alpha=\emptyset$,
so there is $F\in\F_{\alpha+1}\subseteq\F$ such that $F\cap A_\alpha
=\emptyset$.

So it remains to be shown that the inductive construction can be carried out
all the way up to $\cont$. Suppose that we are at stage $\alpha<\cont$.
If $\alpha$ is limit, we can let $\F_\alpha=\bigcup\{\F_\beta:\beta<\alpha\}$
and the conditions will be satisfied. Otherwise $\alpha = \beta+1$. There are
three cases:

\paragraph{Case $\beta = n<\omega$.} Let $K_n=\{x,y\}$. Then by (iv) the
subset $\{[U]:U\in \CO_n\}$ of $\pomega/I$ is independent. Since $\pomega/I$
has independence $\cont$ and since $|\CO_n|\leq\omega<\cont$, there is
an $U^\prime\in \pomega$ such that $\{[U]_I:U\in\CO_n\}\cup\{[U^\prime]_I\}$ is
still independent. Then let $U=(U^\prime\cup\{x\})\setminus\{y\}$. We have that
$[U^\prime]_I=[U]_I$ so (i,iv,v) are satisfied if we let
$\CO_\alpha=\CO_n\cup\{U\}$.

\paragraph{Case $\omega<\beta<\cont,\ \beta$ odd.}
Since $|\CO_\beta|\leq\beta<\cont$ and because $\pomega/I$ has hereditary
independence $\cont$ there is $U^\prime\subseteq\omega$ such that
$\CO_\beta/I\cup\{[U^\prime]_I\}$ is still independent. Let $U=(U^\prime\cap
G_\beta)\cup\{n_\beta\}$. Because $\{n_\beta\}\in I,G_\beta\in I^*$
we have that $[U^\prime]_I=[U]_I$ so we can let $\CO_\alpha=\CO_\beta\cup\{U\}$
and (vii) with all other conditions is satisfied.

\paragraph{Case $\omega<\beta<\cont,\ \beta$ even.}
If $\{[U]_I:U\in\CO_\beta\}\cup\{[\omega\setminus A_\beta]_I\}$ is
independent in $\pomega/I$, then we can let
$\CO_\alpha=\CO_\beta\cup\{\omega\setminus A_\beta\}$ and again all conditions
are satisfied. So suppose otherwise.

If we let $B=\omega\setminus A_\beta$, necessarily $B\not\in I$ (otherwise
already $\hbox{int}_{\tau_0} A_\beta\neq\emptyset$). We claim, that
$\{[U\cap B]_I : U\in\CO_\beta\}$ is independent in $\pomega/I\upharpoonright
[B]_I$: If it were not, then for some elementary meet $M$ over $\CO_\beta$
we would have that $M\cap B\in I$ but then, since $M\subseteq_I A_\beta$,
$\hbox{int}_{\tau_\beta} A_\beta\neq\emptyset$ a contradiction. Now, since
$\pomega/I$ has hereditary independence $\cont$, $\{[U\cap~B]_I:U\in\CO_\beta\}$
is not maximal independent in $\pomega/I\upharpoonright [B]_I$ (by (i)
$|\CO_\beta|\leq\beta<\cont$), so there is $F\subseteq B$ such that $\{[F\cap
B]_I:U\in\CO_\beta\}\cup\{[F]_I\}$ is independent in $\pomega/I\upharpoonright
[B]_I$ so, a fortiori, $\{[U]_I:U\in\CO_\beta\}\cup\{F\}$ is independent in
$\pomega/I$ and if we let $F_\alpha=F_\beta\cup\{F\}$ and
$\CO_\alpha=\CO_\beta\cup\{F\}$ all conditions are satisfied and we are done.
\end{proof}

Together the theorems \ref{refine}, \ref{char2} and \ref{main} give
us the following:

\begin{thm} There is a countable set $S\subseteq\omega^*$ and $p\in\cl{S}$ such
that $(\omega^*\setminus (\cl{S}\setminus S))\cup\{p\}$ contains a lonely point.
\end{thm}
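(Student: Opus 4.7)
The plan is to assemble the three ingredients established earlier in the paper: Theorem~\ref{main}, Theorem~\ref{refine}, and Theorem~\ref{char2}. First I would invoke Theorem~\ref{main} to fix a crowded, $T_2$, zerodimensional topology $\tau_0$ on $\omega$ together with a strongly remote closed filter $\F$ on $(\omega,\tau_0)$ whose generators are $\tau_0$-clopen. Next I would apply Theorem~\ref{refine} to refine $\tau_0$ to a topology $\tau\supseteq\tau_0$ which is still crowded, $T_2$, and zerodimensional, but is additionally extremally disconnected and OHI; set $X:=(\omega,\tau)$.

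The first small step is to check that $\F$ survives the refinement in the sense needed. As noted just before Theorem~\ref{main}, passing to a finer topology can only shrink interiors, so any $\tau$-nowhere dense set $A\subseteq\omega$ already has empty $\tau_0$-interior and is therefore missed by some $F\in\F$ by strong remoteness. Hence $\F$ is a remote closed filter on $X$, and its generators, being $\tau_0$-clopen, remain $\tau$-clopen.

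The second step is to extract a remote point from $\F$. Since $X$ is crowded, every finite set is nowhere dense and so missed by some element of $\F$; in particular $\F$ is free. By compactness the intersection $K:=\bigcap_{F\in\F}\cl{F}^{\beta X}$ is nonempty and entirely contained in $X^*=\beta X\setminus X$. Pick any $p\in K$. Because $X$ is zerodimensional, for each clopen $F\in\F$ the closure $\cl{F}^{\beta X}$ is clopen in $\beta X$. Given any nowhere dense $N\subseteq X$ and a witness $F\in\F$ disjoint from $N$, the inclusion $N\subseteq X\setminus F$ yields $\cl{N}^{\beta X}\subseteq\beta X\setminus\cl{F}^{\beta X}$, so $p\notin\cl{N}^{\beta X}$. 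Hence $p$ is a remote point of $X$.

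At this stage $X$ is a countable, extremally disconnected, OHI space carrying a remote point, so the hypotheses of Theorem~\ref{char2} are met and the theorem delivers the promised $S\subseteq\omega^*$, point $q\in\cl{S}$, and lonely point in $(\omega^*\setminus(\cl{S}\setminus S))\cup\{q\}$. No genuine obstacle arises here: all the technical content has already been packaged in the earlier results, and the only point that needs a moment of care is the filter-to-point passage, which goes through smoothly precisely because the generators of $\F$ are clopen in $X$.
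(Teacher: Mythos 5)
Your proposal is correct and follows exactly the route the paper intends: the paper states this theorem as an immediate consequence of Theorems~\ref{main}, \ref{refine} and \ref{char2}, and you have simply written out the routine verifications (strong remoteness surviving the refinement, and passing from the remote closed filter with clopen generators to an actual remote point of $X^*$ via compactness) that the paper leaves implicit. The only cosmetic remark is that the clopen-generators property is read off from the construction in Theorem~\ref{main} rather than from its statement; but since a countable regular space is normal, disjoint closed sets already have disjoint closures in $\beta X$, so the filter-to-point step goes through even without it.
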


To get a lonely point in $\omega^*$ we would need the filter from \ref{main} to
be a weak P-point. However the only suitable constructions of weak P-points
yield variants of OK points. These points cannot be limit points of ccc
sets. Countable OHI spaces are nowhere locally compact, so their remainder is
ccc and therefore these constructions are of no use. This leads to the following question:

\begin{quizz} \label{wppQ} Is there a countable (or at least separable) nowhere locally compact
space $X$ with a weak P-point of $X^*$.
\end{quizz}

Also the main question remains open:

\begin{quizz} Is there a lonely point in $\omega^*$?
\end{quizz}

\bibliography{lonely-points}
\end{document}